\newcommand{\NN}{\mathbb{N}}
\newcommand{\RR}{\mathbb{R}}
\newcommand{\CC}{\mathbb{C}}
\newcommand{\HH}{\mathbb{H}}
\newcommand{\M}[2]{M({#1},{#2})}
\newcommand{\kk}{\mathfrak{k}}
\DeclareMathOperator{\im}{im}
\newcommand{\imp}{\mathfrak{m}}
\newtheorem{proposition}{Proposition}[section]
\newtheorem{lemma}[proposition]{Lemma}
\newtheorem{corollary}[proposition]{Corollary}
\newtheorem{consequence}[proposition]{Consequence}
\newenvironment{customthm}[1]
  {\innercustomthm}
  {\endinnercustomthm}
\theoremstyle{remark}
\newtheorem{remark}[proposition]{Remark}
\theoremstyle{definition}
\newtheorem{definition}[proposition]{Definition}
\newcommand{\En}[2]{
  \begin{tikzpicture}[scale=.5]
    \draw (-1,1) node[anchor=east]  {#2};
    \draw[thick,xshift=0 cm] (0 cm,0) circle (3 mm) node[below=1mm] {$1$};
    \draw[thick] (4 cm,2 cm) circle (3 mm) node[above=1mm] {$2$};
    \ifx3#1
    \else
	    \draw[thick] (4 cm, 3mm) -- +(0, 1.4 cm);
	\fi
    \foreach \x in {3,...,#1}
      \draw[thick,xshift=\x cm - 2 cm] (\x cm - 2 cm,0) circle (3 mm) node[below=1mm] {$\x$};
    \foreach \y in {3,...,#1}
      \draw[thick,xshift=\y cm - 3 cm] (\y cm - 3 cm,0) ++(.3 cm, 0) -- +(14 mm,0);
  \end{tikzpicture}
}
\begin{document}

\title{Generalized spin representations. \\ Part 2: Cartan--Bott periodicity for the split real $E_n$ series}
\author{Max Horn}
\address{JLU Giessen, Mathematisches Institut, Arndtstrasse 2, 35392 Giessen, Germany}
\email{max.horn@math.uni-giessen.de }
\author{Ralf K\"ohl (n\'e Gramlich)}
\address{JLU Giessen, Mathematisches Institut, Arndtstrasse 2, 35392 Giessen, Germany}
\email{ralf.koehl@math.uni-giessen.de }

\maketitle
\begin{abstract}
In this article we analyze the quotients of the maximal compact subalgebras of the split real Kac--Moody algebras of the $E_n$ series resulting from the generalized spin representation introduced in \cite{Hainke/Koehl/Levy}. It turns out that these quotients satisfy a Cartan--Bott periodicity.

Our findings are also meaningful in the finite-dimensional cases of $A_2 \oplus A_1$, $A_4$, $D_5$, $E_6$, $E_7$, $E_8$, where it turns out that the generalized spin representation is injective. Consequently the observed Cartan--Bott periodicity provides a structural explanation for the seemingly sporadic isomorphism types of the maximal compact Lie subalgebras of the split real Lie algebras of types $E_6$, $E_7$, $E_8$.
\end{abstract}

\section{Introduction}

In this article we continue the investigation of the generalized spin representations introduced in the first part \cite{Hainke/Koehl/Levy}. We focus on the $E_n$ series and use the original description of the generalized spin representation from \cite{DamourKleinschmidtNicolai}, \cite{deBuylHenneauxPaulot}, \cite{Hainke/Koehl/Levy} via Clifford algebras.

The $E_n$ series is traditionally only defined for $n\in\{6,7,8\}$. However,
using the Bourbaki style labeling shown in Figure~\ref{fig:En}, it naturally
extends to arbitrary $n\in\NN$. Using this description, one has
$E_1=A_1$, $E_2=A_1\oplus A_1$, $E_3=A_2\oplus A_1$, $E_4=A_4$, $E_5=D_5$
(see Figure~\ref{fig:E3-to-E8}).

\begin{figure}[h]
  \begin{tikzpicture}[scale=.5]
    \draw (-1,1) node[anchor=east]  {$E_n$};
    \draw[thick,xshift=0 cm] (0 cm,0) circle (3 mm) node[below=1mm] {$1$};
    \foreach \x in {3,...,6}
      \draw[thick,xshift=\x cm - 2 cm] (\x cm - 2 cm,0) circle (3 mm) node[below=1mm] {$\x$};
    \foreach \y in {0,...,3}
      \draw[thick,xshift=\y cm] (\y cm,0) ++(.3 cm, 0) -- +(14 mm,0);
    \draw[thick,xshift=5 cm] (5 cm,0) circle (3 mm) node[below=1mm] {$n$};
    \draw[thick,xshift=4 cm,dashed] (4 cm,0) ++(.3 cm, 0) -- +(14 mm,0);
    \draw[thick] (4 cm,2 cm) circle (3 mm) node[above=1mm] {$2$};
    \draw[thick] (4 cm, 3mm) -- +(0, 1.4 cm);
  \end{tikzpicture}
\caption{The Dynkin diagram of type $E_n$}
\label{fig:En}
\end{figure}
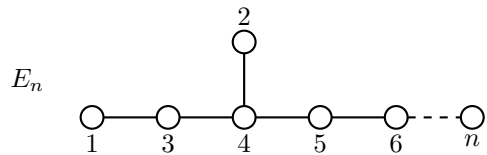

An elementary combinatorial counting argument using binomial coefficients allows us to determine lower bounds for the $\RR$-dimension of the images of the generalized spin representation. These images have to be compact, whence reductive by \cite[Theorem~4.11]{Hainke/Koehl/Levy} and even semisimple, if the diagram be irreducible, thus providing an upper bound for the $\RR$-dimension via the maximal compact Lie subalgebras of the Clifford algebras. As it turns out, the lower and the upper bounds coincide, providing the following Cartan--Bott periodicity.

\begin{customthm}{A}[Cartan--Bott periodicity of the $E_{n}$ series]\label{thm:A}
Let $n \in \NN$ with $n\geq 4$, let $\mathfrak{k}$ be the maximal compact Lie subalgebra of the split real Kac--Moody Lie algebra of type $E_n$, let $C=C(\RR^n,q)$ be the Clifford algebra with respect to the standard positive definite quadratic form $q$ and let $\rho : \mathfrak{k} \to C$ be the standard generalized spin representation.

Then $\im(\rho)$ is isomorphic to
\begin{enumerate}
\setcounter{enumi}{-1}
\item $\mathfrak{so}(2^{\frac{n}{2}})) \leq \RR\otimes_\RR \M{2^{\frac{n}{2}}}{\RR}$, if $n \equiv 0 \pmod 8$,
\item $\mathfrak{so}(2^{\frac{n-1}{2}}) \oplus \mathfrak{so}(2^{\frac{n-1}{2}})  \leq \left( \RR \oplus \RR \right) \otimes_\RR \M{2^{\frac{n-1}{2}}}{\RR}$, if $n \equiv 1 \pmod 8$,
\item $\mathfrak{so}(2^{\frac{n}{2}}) \leq \M{2}{\RR} \otimes_\RR \M{2^{\frac{n-2}{2}}}{\RR}$, if $n \equiv 2 \pmod 8$,
\item $\mathfrak{su}(2^{\frac{n-1}{2}}) \leq \M{2}{\CC} \otimes_\RR \M{2^{\frac{n-3}{2}}}{\RR}$, if $n \equiv 3 \pmod 8$,
\item $\mathfrak{sp}(2^{\frac{n-2}{2}}) \leq \M{2}{\HH} \otimes_\RR \M{2^{\frac{n-4}{2}}}{\RR}$, if $n \equiv 4 \pmod 8$,
\item $\mathfrak{sp}(2^{\frac{n-3}{2}})\oplus \mathfrak{sp}(2^{\frac{n-3}{2}}) \leq \left( \M{2}{\HH} \oplus \M{2}{\HH} \right) \otimes_\RR \M{2^{\frac{n-5}{2}}}{\RR}$, if $n \equiv 5 \pmod 8$,
\item $\mathfrak{sp}(2^{\frac{n-2}{2}}) \leq \M{4}{\HH} \otimes_\RR \M{2^{\frac{n-6}{2}}}{\RR}$, if $n \equiv 6 \pmod 8$,
\item $\mathfrak{su}(2^{\frac{n-1}{2}}) \leq \M{8}{\CC} \otimes_\RR \M{2^{\frac{n-7}{2}}}{\RR}$, if $n \equiv 7 \pmod 8$,
\end{enumerate}
i.e., $\im(\rho)$ is a semisimple maximal compact Lie subalgebra of $C$.
\end{customthm}

\medskip
Along the way we arrive at a structural explanation for the isomorphism types of the maximal compact Lie subalgebras of the semisimple split real Lie algebras of types 
$E_3=A_2\oplus A_1$, $E_4=A_4$, $E_5=D_5$, $E_6$, $E_7$, $E_8$.

\begin{customthm}{B} \label{thm:B}
The maximal compact Lie subalgebras of the semisimple split real Lie algebras of types
$A_2\oplus A_1$, $A_4$, $D_5$, $E_6$, $E_7$, $E_8$ are isomorphic to
$\mathfrak{u}(2)$, 
$\mathfrak{sp}(2)\cong\mathfrak{so}(5)$, 
$\mathfrak{sp}(2)\oplus\mathfrak{sp}(2)\cong\mathfrak{so}(5)\oplus\mathfrak{so}(5)$, 
$\mathfrak{sp}(4)$, 
$\mathfrak{su}(8)$, 
$\mathfrak{so}(16)$, 
respectively.
\end{customthm}

\bigskip \noindent
\textbf{Acknowledgements.}
We thank Klaus Metsch for pointing out to us the identity of sums of binomial coefficients in 
Proposition~\ref{binomial}.
  This research has been partially funded by the EPRSC grant 
EP/H02283X. The second author gratefully acknowledges the hospitality of the IHES at Bures-sur-Yvette and of the Albert Einstein Institute at Golm.

\begin{figure}[h]
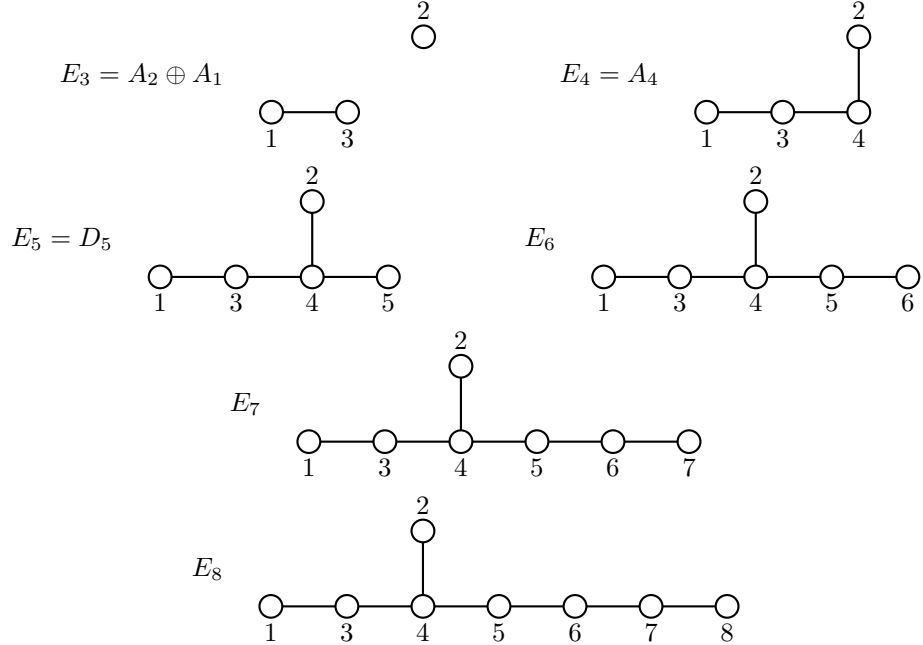

\En{3}{$E_3=A_2\oplus A_1$}
  \hspace{1cm}
\En{4}{$E_4=A_4$}

\En{5}{$E_5=D_5$}
  \hspace{1cm}
\En{6}{$E_6$}

\En{7}{$E_7$}

\En{8}{$E_8$}
\caption{The Dynkin diagrams of types $E_3$ to $E_8$.}
\label{fig:E3-to-E8}
\end{figure}

\section{Cartan--Bott periodicity of Clifford algebras} \label{sec:cartan-bott}

Let $\NN = \{1,2,3,\ldots\}$ be the set of natural numbers, and let $\RR$, $\CC$, resp.\ $\HH$ denote the reals, complex numbers resp.\ quaternions. For $n\in\NN$ and a division ring $\mathbb{D}$, denote by $M(n,\mathbb{D})$ the $\mathbb{D}$-algebra of $n\times n$ matrices over $\mathbb{D}$.

Let $V$ be an $\RR$-vector space and $q\colon V \to \RR$ a quadratic form with associated bilinear form $b$. Then the \textbf{Clifford algebra}
$C(V,q)$ is defined as $C(V,q):=T(V)/\langle vw+wv-b(v,w) \rangle$ where $T(V)$ is the tensor algebra of $V$; cf.\  \cite[Section~4.3]{Kobayashi/Yoshino:2005}, \cite[Chapter 1, \S1]{Lawson/Michelsohn:1989}.

Let $V=\RR^{n}$ with standard basis vectors $v_i$, let $q=x_1^2+\cdots+x_{n}^2$.
Then in $C(V,q)$ we have $v_i^2=1$ and $v_iv_j=-v_jv_i$. 
 
\begin{proposition}[Cartan--Bott periodicity] \label{prop:cartan-bott}
For $n\geq 2$, the Clifford algebra $C(\RR^n,q)$ is isomorphic to the following algebra:
\begin{enumerate}
\setcounter{enumi}{-1}
\item $\RR\otimes_\RR \M{2^{\frac{n}{2}}}{\RR}$, if $n \equiv 0 \pmod 8$,
\item $\left( \RR \oplus \RR \right) \otimes_\RR \M{2^{\frac{n-1}{2}}}{\RR}$, if $n \equiv 1 \pmod 8$,
\item $\M{2}{\RR} \otimes_\RR \M{2^{\frac{n-2}{2}}}{\RR}$, if $n \equiv 2 \pmod 8$,
\item $\M{2}{\CC} \otimes_\RR \M{2^{\frac{n-3}{2}}}{\RR}$, if $n \equiv 3 \pmod 8$,
\item $\M{2}{\HH} \otimes_\RR \M{2^{\frac{n-4}{2}}}{\RR}$, if $n \equiv 4 \pmod 8$,
\item $\left( \M{2}{\HH} \oplus \M{2}{\HH} \right) \otimes_\RR \M{2^{\frac{n-5}{2}}}{\RR}$, if $n \equiv 5 \pmod 8$,
\item $\M{4}{\HH} \otimes_\RR \M{2^{\frac{n-6}{2}}}{\RR}$, if $n \equiv 6 \pmod 8$,
\item $\M{8}{\CC} \otimes_\RR \M{2^{\frac{n-7}{2}}}{\RR}$, if $n \equiv 7 \pmod 8$.
\end{enumerate}
\end{proposition}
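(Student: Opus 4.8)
The plan is to run the classical inductive classification of real Clifford algebras, but carried out in the normalization $v_i^2=1$ fixed in this section and --- crucially --- keeping track not merely of the abstract isomorphism type of $C(\RR^n,q)$, but of the displayed factorization into a tensor product of an algebra over $\RR$, $\CC$ or $\HH$ with a full matrix algebra $\M{2^k}{\RR}$. That refinement is exactly what Theorem~\ref{thm:A} draws on later, so it must be propagated through every step rather than recovered afterwards.

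First I would fix notation for the two relevant families: let $C^{+}_n:=C(\RR^n,q)$, the Clifford algebra of the positive definite form (all basis vectors squaring to $+1$), and let $C^{-}_n$ denote the Clifford algebra of the negative definite form (basis vectors squaring to $-1$). I would recall the standard facts that $\dim_\RR C^{\pm}_n=2^n$ and that $C^{\pm}_n$ has the universal property that any $n$ pairwise anticommuting elements of a unital $\RR$-algebra squaring to the appropriate sign determine a homomorphism out of it, together with the elementary real tensor identities $\M{a}{\mathbb{D}}\otimes_\RR\M{b}{\RR}\cong\M{ab}{\mathbb{D}}$ for $\mathbb{D}\in\{\RR,\CC,\HH\}$, $\CC\otimes_\RR\HH\cong\M{2}{\CC}$ and $\HH\otimes_\RR\HH\cong\M{4}{\RR}$ (cf.\ \cite{Lawson/Michelsohn:1989}). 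The base cases are then done by hand: $C^{+}_1\cong\RR\oplus\RR$ and $C^{-}_1\cong\CC$ from the one-generator relation, $C^{+}_2\cong\M{2}{\RR}$ (send $v_1,v_2$ to $\mathrm{diag}(1,-1)$ and the coordinate swap), and $C^{-}_2\cong\HH$ (the two generators and their product realize the imaginary units).

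The engine of the argument is the pair of ladder isomorphisms
\[
 C^{+}_{n+2}\;\cong\;C^{-}_n\otimes_\RR\M{2}{\RR},\qquad
 C^{-}_{n+2}\;\cong\;C^{+}_n\otimes_\RR\HH .
\]
For the first, take generators $e_1,\dots,e_n$ of $C^{-}_n$ and generators $f_1,f_2$ of $\M{2}{\RR}\cong C^{+}_2$; one checks that $\bar e_i:=e_i\otimes f_1f_2$ and $\bar f_j:=1\otimes f_j$ pairwise anticommute and all square to $+1$ (using $(f_1f_2)^2=-1$ and that $f_1f_2$ anticommutes with each $f_j$), so the universal property gives a homomorphism $C^{+}_{n+2}\to C^{-}_n\otimes_\RR\M{2}{\RR}$; its image contains $1\otimes f_1f_2$, hence each $e_i\otimes 1$, so it is surjective, and the dimension count $2^{n+2}=2^n\cdot 4$ upgrades it to an isomorphism. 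The second identity is proved identically with the signs interchanged. Composing the two and applying the tensor identities yields $C^{+}_{n+4}\cong C^{+}_n\otimes_\RR\M{2}{\HH}$ and then the $8$-periodicity $C^{+}_{n+8}\cong C^{+}_n\otimes_\RR\M{16}{\RR}$. Feeding the base cases through the ladder produces $C^{+}_3\cong\M{2}{\CC}$, $C^{+}_4\cong\M{2}{\HH}$, $C^{+}_5\cong\M{2}{\HH}\oplus\M{2}{\HH}$, $C^{+}_6\cong\M{4}{\HH}$, $C^{+}_7\cong\M{8}{\CC}$ and $C^{+}_8\cong\M{16}{\RR}$, which together with $C^{+}_1$ and $C^{+}_2$ are precisely the asserted values for $1\le n\le 8$. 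For $n\ge 9$ write $n=m+8$ with $1\le m<n$ and $m\equiv n\pmod 8$; then $C^{+}_n\cong C^{+}_m\otimes_\RR\M{2^4}{\RR}$, and tensoring the matrix-algebra factor of the inductively known form of $C^{+}_m$ by $\M{2^4}{\RR}$ raises its exponent by $4$, which is exactly the change in $\tfrac{n}{2},\ \tfrac{n-1}{2},\dots$ caused by replacing $n$ with $m$. This closes the induction.

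I do not expect a genuine obstacle --- the statement is classical --- so the difficulty is entirely bookkeeping, concentrated in two conventional points. One must stay in the $v_i^2=+1$ normalization, in which the classification table is the ``reverse'' of the $v_i^2=-1$ table that is primary in much of the literature, including parts of \cite{Lawson/Michelsohn:1989}; and one must retain the explicit $\otimes_\RR\M{2^k}{\RR}$ factorizations rather than just the Morita class, since Theorem~\ref{thm:A} is read off by intersecting these factorizations with maximal compact Lie subalgebras. A minor point is that, because the Proposition is stated for $n\ge 2$ while the induction reduces $n\ge 9$ to $1\le m\le 8$, one needs a full period $C^{+}_1,\dots,C^{+}_8$ of base cases (equivalently, one may start the periodicity from $C^{+}_0\cong\RR$).
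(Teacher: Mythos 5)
Your argument is correct, but it is not what the paper does: the paper gives no proof at all and simply cites the classification table in the literature (\cite[Proposition~4.4.1 + Table~4.4.1]{Kobayashi/Yoshino:2005}). What you have written is the classical self-contained derivation, essentially the one in \cite[Chapter~1, \S 4]{Lawson/Michelsohn:1989}: base cases $C^{+}_1\cong\RR\oplus\RR$, $C^{+}_2\cong\M{2}{\RR}$, $C^{-}_1\cong\CC$, $C^{-}_2\cong\HH$, the two ladder isomorphisms $C^{+}_{n+2}\cong C^{-}_n\otimes_\RR\M{2}{\RR}$ and $C^{-}_{n+2}\cong C^{+}_n\otimes_\RR\HH$ (your verification of the anticommutation and sign bookkeeping via $(f_1f_2)^2=-1$, surjectivity from $1\otimes f_1f_2$ lying in the image, and the dimension count $2^{n+2}$ is exactly right), and then $C^{+}_{n+8}\cong C^{+}_n\otimes_\RR\M{16}{\RR}$ via $\CC\otimes_\RR\HH\cong\M{2}{\CC}$ and $\HH\otimes_\RR\HH\cong\M{4}{\RR}$, with one full period $n=1,\dots,8$ of explicitly computed values feeding the induction. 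Your two points of care are also the right ones: the paper's normalization $v_i^2=+1$ means the table is the ``opposite-sign'' one from the spin-geometry literature, and retaining the explicit $\otimes_\RR\M{2^k}{\RR}$ factorization (rather than only the Morita class) is what Corollary~\ref{cor:cartan-bott-max-cpt} and Theorem~\ref{thm:A} later exploit. What the paper's citation buys is brevity for a classical fact; what your version buys is a self-contained text in the stated convention, at the cost of roughly a page of standard bookkeeping.
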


\begin{proof}
See e.g.\ \cite[Proposition~4.4.1 + Table~4.4.1]{Kobayashi/Yoshino:2005}.
\end{proof}

Since $C(V,q)$ is an associative algebra, it becomes a Lie algebra by setting $[A,B]:=AB-BA$. 
With this in mind, Proposition~\ref{prop:cartan-bott} implies the following:

\begin{corollary} \label{cor:cartan-bott-max-cpt}
For $n\geq 2$, the maximal semisimple compact Lie subalgebra of the Clifford algebra $C(\RR^n,q)$ is isomorphic to the following Lie algebra:
\begin{enumerate}
\setcounter{enumi}{-1}
\item $\mathfrak{so}(2^{\frac{n}{2}})$, if $n \equiv 0 \pmod 8$,
\item $\mathfrak{so}(2^{\frac{n-1}{2}})\oplus\mathfrak{so}(2^{\frac{n-1}{2}})$, if $n \equiv 1 \pmod 8$,
\item $\mathfrak{so}(2^{\frac{n}{2}})$, if $n \equiv 2 \pmod 8$,
\item $\mathfrak{su}(2^{\frac{n-1}{2}})$, if $n \equiv 3 \pmod 8$,
\item $\mathfrak{sp}(2^{\frac{n-2}{2}})$, if $n \equiv 4 \pmod 8$,
\item $\mathfrak{sp}(2^{\frac{n-3}{2}})\oplus\mathfrak{sp}(2^{\frac{n-3}{2}})$, if $n \equiv 5 \pmod 8$,
\item $\mathfrak{sp}(2^{\frac{n-2}{2}})$, if $n \equiv 6 \pmod 8$,
\item $\mathfrak{su}(2^{\frac{n-1}{2}})$, if $n \equiv 7 \pmod 8$.
\end{enumerate}
\end{corollary}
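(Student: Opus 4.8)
The plan is to reduce the statement to Proposition~\ref{prop:cartan-bott} together with one structural fact about reductive Lie algebras. First I would absorb the scalar matrix factor appearing in Proposition~\ref{prop:cartan-bott}: using $\M{a}{\RR}\otimes_\RR\M{b}{\RR}\cong\M{ab}{\RR}$ and $\M{a}{\mathbb{D}}\otimes_\RR\M{b}{\RR}\cong\M{ab}{\mathbb{D}}$ for $\mathbb{D}\in\{\CC,\HH\}$, each of the eight cases collapses, so that $C(\RR^n,q)$ is, as an associative $\RR$-algebra, isomorphic either to $\M{k}{\mathbb{D}}$ or to $\M{k}{\mathbb{D}}\oplus\M{k}{\mathbb{D}}$, with $\mathbb{D}\in\{\RR,\CC,\HH\}$ and $k$ an explicit power of two determined by $n\bmod 8$. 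In particular $C(\RR^n,q)$ is semisimple as an associative algebra, hence reductive as a Lie algebra under the commutator bracket. So it suffices to identify the maximal semisimple compact Lie subalgebra of $\M{k}{\mathbb{D}}$, and of a direct sum of two copies of it, and then to substitute the values of $k$.

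The structural fact is the following. Let $\mathfrak{g}$ be a reductive real Lie algebra and fix a maximal compact subalgebra $\mathfrak{k}$, so that $\mathfrak{k}=\mathfrak{z}(\mathfrak{k})\oplus[\mathfrak{k},\mathfrak{k}]$. Any compact semisimple subalgebra $\mathfrak{s}\subseteq\mathfrak{g}$ is conjugate into $\mathfrak{k}$, and since $\mathfrak{s}=[\mathfrak{s},\mathfrak{s}]$ the image then lies in $[\mathfrak{k},\mathfrak{k}]$; as all maximal compact subalgebras of $\mathfrak{g}$ are conjugate (Cartan), $[\mathfrak{k},\mathfrak{k}]$ is, up to isomorphism, the unique largest semisimple compact Lie subalgebra of $\mathfrak{g}$. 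It remains to read off $[\mathfrak{k},\mathfrak{k}]$ in the three relevant cases. For $\mathfrak{g}=\M{k}{\RR}$, i.e.\ $\mathfrak{gl}_k(\RR)$, a maximal compact subalgebra is $\mathfrak{o}(k)$ with derived subalgebra $\mathfrak{so}(k)$; for $\mathfrak{g}=\M{k}{\CC}$ regarded as a real Lie algebra it is $\mathfrak{u}(k)$ with derived subalgebra $\mathfrak{su}(k)$; and for $\mathfrak{g}=\M{k}{\HH}$ it is $\mathfrak{sp}(k)$, which is already semisimple. (Throughout, $\mathfrak{so}(k)$ and $\mathfrak{su}(k)$ are to be read as $0$ in the finitely many small-$k$ cases where they fail to be semisimple.) Finally, a maximal compact subalgebra of a finite direct sum is the direct sum of maximal compact subalgebras and taking derived subalgebras commutes with direct sums, which disposes of the cases $n\equiv 1,5\pmod 8$.

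It then only remains to push the eight algebras of Proposition~\ref{prop:cartan-bott} through this machine and track the exponents. For instance, when $n\equiv 4\pmod 8$ one has $\M{2}{\HH}\otimes_\RR\M{2^{(n-4)/2}}{\RR}\cong\M{2^{(n-2)/2}}{\HH}$, whose maximal semisimple compact Lie subalgebra is $\mathfrak{sp}(2^{(n-2)/2})$, which is exactly entry~(4); the remaining seven residue classes are treated identically. Along the way one checks that every entry is genuinely semisimple, the only exception being $\mathfrak{so}(2^{n/2})=\mathfrak{so}(2)$ at $n=2$ in case~(2), which is abelian, so that there the maximal semisimple compact subalgebra is $0$; no such degeneracy arises for $n\geq 3$. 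I do not expect a real obstacle here: the single conceptual ingredient is the conjugacy-plus-derived-subalgebra observation of the second paragraph, and the rest is the routine matching of powers of two against the table of Proposition~\ref{prop:cartan-bott}.
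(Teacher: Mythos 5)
Your argument is correct and takes essentially the route the paper intends: the paper states the corollary as an immediate consequence of Proposition~\ref{prop:cartan-bott}, and your steps (collapsing the matrix tensor factors, then identifying the maximal semisimple compact Lie subalgebra of $\M{k}{\mathbb{D}}$, resp.\ of a direct sum of two copies, as the derived algebra of a maximal compact subalgebra via Cartan conjugacy) are exactly the implicit implication, with the exponents matching in all eight residue classes. Your caveat about the degenerate abelian entry $\mathfrak{so}(2)$ at $n=2$ is a fair observation that the paper glosses over and is irrelevant for the application, which only uses $n\geq 3$.
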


\section{A lower bound on the dimension of a subalgebra}

\begin{definition}
For $n\geq 3$ let $\imp$ be the Lie subalgebra of $C(\RR^n,q)$ generated by $v_1v_2v_3$ and by $v_iv_{i+1}$, $1 \leq i < n$.
\end{definition}

\begin{lemma} \label{lowerbounddimension}
Let $n\geq 3$.
Then $\imp$ contains all products of the form $v_{j_1}v_{j_2}\cdots v_{j_k}$ for $2\leq k\leq n$ and $k \equiv 2, 3 \pmod 4$ with pairwise distinct $j_t \in \{ 1, \ldots, n \}$, with the possible exception of $v_1v_2\cdots v_n$. 
The exception can only happen if $n \equiv 3 \pmod 4$. 
\end{lemma}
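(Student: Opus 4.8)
The plan is to work inside $C(\RR^n,q)$ with the spanning set of "monomials" $v_S := v_{j_1}\cdots v_{j_k}$ for $S=\{j_1<\cdots<j_k\}\subseteq\{1,\dots,n\}$, and to exploit the basic commutator identity: for disjoint nonempty $S,T$ the bracket $[v_S,v_T]$ is (up to a nonzero scalar) $v_{S\triangle T}$ when $|S|\cdot|T|$ is odd, and $0$ when $|S|\cdot|T|$ is even, because $v_S v_T = (-1)^{|S||T|} v_T v_S$ in this setting (the $v_i$ anticommute and square to $1$, and $S,T$ being disjoint means no sign ambiguity beyond reordering); when $S,T$ overlap one gets a shorter monomial $v_{S\triangle T}$ up to sign after cancelling the common letters, again provided the relevant parity works out. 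So the engine of the proof is: brackets of monomials produce other monomials (of controlled length parity), and I must track which lengths $k\bmod 4$ are reachable, given that I start from a length-$3$ generator $v_1v_2v_3$ and length-$2$ generators $v_iv_{i+1}$.

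I would organize the argument by an induction on $k$, the length of the target monomial, proving simultaneously that $\imp$ contains every $v_S$ with $|S|=k$, $k\equiv 2,3\pmod 4$, $2\le k\le n$ (excepting possibly $S=\{1,\dots,n\}$). The base cases $k=2$ and $k=3$: for $k=2$, first show all $v_iv_j$ lie in $\imp$ — the generators $v_iv_{i+1}$ give the "adjacent" ones, and $[v_iv_{i+1},v_{i+1}v_{i+2}]$ is a nonzero multiple of $v_iv_{i+2}$ (overlap case, $|S||T|$ even so the bracket is a single monomial rather than zero), so by connectedness of the path one gets all $v_iv_j$; for $k=3$, bracket a length-$2$ monomial with the length-$3$ generator appropriately — $[v_av_b,\, v_1v_2v_3]$ with suitable overlap yields length-$3$ or length-$1$ monomials, and by first producing enough length-$2$ and using $[v_1v_2v_3, v_iv_j]$ one propagates to an arbitrary $v_iv_jv_k$ (here one must be a little careful that the parity $2\cdot 3=6$ is even, so these particular brackets vanish — instead one should bracket $v_1v_2v_3$ with a length-$2$ monomial sharing exactly one index, giving a length-$3$ monomial, and check the resulting "graph" on index sets is connected). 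For the inductive step from $k-2$ (or from $k-1$, $k+1$) to $k$: given a target $v_S$ with $|S|=k\equiv 2,3\pmod 4$, write $S = S' \triangle T$ with $|S'|\equiv 2,3\pmod 4$ strictly smaller, $|T|$ small, $|S'||T|$ odd, and $v_{S'}\in\imp$ by induction while $v_T\in\imp$ because $|T|\in\{2,3\}$ after arranging parities; then $[v_{S'},v_T]$ is a nonzero multiple of $v_S$. The bookkeeping amounts to: from lengths $\equiv 2,3\pmod 4$ one can reach all lengths $\equiv 2,3\pmod 4$ by steps that change length by $\pm1$ (disjoint union/removal of a single index via an overlap of size one with a length-$2$ piece — parity $1\cdot 2$ is even, bad) or by $\pm 2$ — so the right move is combining with a length-$3$ generator overlapping in one or two indices (parity $3\cdot(\text{odd})$), or with a length-$2$ generator overlapping in one index to kill it. One checks that the reachable residues, starting from $\{2,3\}$, close up to exactly $\{2,3\}\pmod 4$ and never force $k\equiv 0,1$; conversely one never needs $v_{\{1,\dots,n\}}$ as an intermediate step, and one genuinely cannot reach it when $n\equiv 3\pmod 4$ unless handed it, which accounts for the stated exception (when $n\equiv 0,1,2\pmod 4$ either $v_{\{1,\dots,n\}}$ has length in $\{2,3\}\bmod 4$ and is produced like any other such monomial, or it is excluded from the claim outright, so no clash).

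The main obstacle — and the point needing the most care — is the parity bookkeeping at the inductive step: I must guarantee that for every admissible target length $k$ there is a decomposition $S=S'\triangle T$ with \emph{both} pieces already known to lie in $\imp$ \emph{and} with $|S'|\,|T|$ odd so that the bracket does not vanish, all while staying within index sets contained in $\{1,\dots,n\}$ (so that $T$ can actually be chosen disjoint from or overlapping $S'$ as needed — this is where the room "$k\le n$" and the possible failure at $k=n$, $n\equiv 3\pmod4$, enter). I expect to handle this by a short case analysis on $k\bmod 4\in\{2,3\}$ and on whether $n-k$ leaves enough free indices: e.g. to build $v_S$ with $|S|=k\equiv 3\pmod 4$ one can take $T$ of size $3$ overlapping $S$ in two indices (so $|S'|=k-1\equiv 2\pmod 4$, $|S'|\,|T|=3(k-1)$ odd), and to build $|S|=k\equiv 2\pmod 4$ take $T$ of size $3$ overlapping in one index ($|S'|=k-2\equiv 0$? — no; overlap in one index gives $|S'|=k+1-2\cdot 0$... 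I would instead use $|S'|=k+1$ decreasing, or swap to: $|S|=k$, remove via a length-$2$ generator $v_av_b\subseteq\{1,\dots,n\}$ with $\{a,b\}\cap S=\{a\}$, giving $v_{S\triangle\{a,b\}}$ of length $k$ again but shifting the support — useful to translate a known monomial to the target), chaining such translations with one genuine length-changing step. The verification that these moves suffice, in the correct order and without ever needing an inadmissible intermediate (length $\equiv 0,1\pmod4$ or the full product when $n\equiv3$), is the crux; everything else is the routine sign/scalar arithmetic in the Clifford algebra, which I would not spell out.
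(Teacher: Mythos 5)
There is a genuine gap, and it sits exactly where you yourself locate ``the crux.'' Your engine is the right one (brackets of Clifford monomials are again monomials, induct on the length $k$ starting from the length-$2$ and length-$3$ generators), but the step that actually proves the lemma --- exhibiting, for each admissible target, a decomposition whose bracket provably does not vanish --- is left unfinished and is partially wrong as sketched. Concretely: the nonvanishing criterion you state ($[v_S,v_T]\neq 0$ iff $|S|\,|T|$ is odd) is only valid for \emph{disjoint} supports; for overlapping supports the relevant parity is $|S|\,|T|-|S\cap T|$, which you never pin down, and you then apply the disjoint criterion in an overlapping situation and miscompute it (``$|S'|\,|T|=3(k-1)$ odd'' is false, since $k\equiv 3\pmod 4$ makes $k-1$ even; the bracket is in fact nonzero, but for the overlap reason, not the one you give). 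Worse, for targets with $k\equiv 2\pmod 4$ your case analysis visibly breaks off (``$|S'|=k-2\equiv 0$? --- no; \ldots'') and you end by conceding that the verification that the moves suffice, in the right order and without inadmissible intermediates, is not carried out. That verification \emph{is} the lemma; without it the argument is a plan, not a proof. The same unfinished bookkeeping is what should produce the exceptional case: you assert the construction fails only at $k=n\equiv 3\pmod 4$, but with no explicit move list this cannot be checked.

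The repair is short and is what the paper does. First, all $v_iv_j$ lie in $\imp$ (the $v_iv_{i+1}$ generate $\Lambda^2\RR^n\cong\mathfrak{so}(n)$). Second, a ``translation'' move: $[v_{j_1}v_{j_2},\,v_{j_2}v_{j_3}\cdots v_{j_{k+1}}]=2v_{j_1}v_{j_3}\cdots v_{j_{k+1}}$, so as soon as one monomial of length $k$ lies in $\imp$, all of them do, provided $k+1\le n$ (and trivially for $k=n$, where all are proportional). This reduces everything to producing a single monomial of each admissible length, via exactly two length-changing moves with the generator $v_1v_2v_3$: from $k\equiv 3\pmod 4$ jump to $k+3\equiv 2$ by $[v_1v_2v_3,\,v_4\cdots v_{k+3}]=2v_1v_2\cdots v_{k+3}$ (disjoint supports, odd times odd); from $k\equiv 2\pmod 4$ jump to $k+1\equiv 3$ by $[v_1v_2v_3,\,v_3v_4\cdots v_{k+2}]=2v_1v_2v_4\cdots v_{k+2}$ (overlap in the single index $3$). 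The second move needs the index $k+2\le n$, and this is the \emph{only} place a free index is required, which is precisely why the construction can fail only for the full product $v_1\cdots v_n$ with $n\equiv 3\pmod 4$. If you fill in your sketch with these two explicit bracket computations (and the correct overlap parity), your argument becomes the paper's proof; as written, the inductive step is not established.
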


\begin{proof}
It is well-known that all products $v_{j_1}v_{j_2}$, $j_1 \neq j_2$, are contained in $\imp$: Indeed, $\Lambda^2 \RR^n \cong \mathfrak{so}(n)$ (cf., e.g., \cite[Proposition~6.1]{Lawson/Michelsohn:1989}) is generated as a Lie algebra by the $v_iv_{i+1}$, $1 \leq i < n$ (cf., e.g., \cite[Theorem~1.31]{Berman}, \cite[Theorem~2.1]{Hainke/Koehl/Levy}).

Moreover, for pairwise distinct $j_t$, $1 \leq t \leq k+1$, one has 
\[[v_{j_1}v_{j_2},\;v_{j_2}v_{j_3}\cdots v_{k+1}] = 2v_{j_1}v_{j_3}\cdots v_{j_{k+1}}.\]
Since re-ordering of the factors simply yields scalar multiples, this shows inductively that, as long as $k+1 \leq n$, once an arbitrary factor of the form $v_{j_1}v_{j_2}\cdots v_{j_k}$ is contained in the Lie subalgebra, all factors of that form are contained in the Lie subalgebra. This statement is also true in the situation $k=n$, because in that case all factors of that form are scalar multiples of one another.   

We finally prove the claim by induction over $k$. For $k=2$ and $k=3$, this is obvious.
Suppose the claim holds for $k\equiv 3\pmod 4$, then the next value for $k$ to consider
is $k+3\equiv 2\pmod 4$. By induction hypothesis
$v_4v_5\cdots v_{k+3} \in \imp$ and
\[0 \neq [v_1v_2v_3,v_4v_5\cdots v_{k+3}] = 2 v_1v_2v_3v_4\cdots v_{k+3}. \]
If on the other hand the claim holds for $k\equiv 2\pmod 4$, then the next value for $k$ to consider
is $k+1\equiv 3\pmod 4$. If $k+2\leq n$, then by
induction hypothesis $v_3v_4\cdots v_{k+2}\in\imp$ and
\[0 \neq [v_1v_2v_3,v_3v_4\cdots v_{k+2}] = 2v_1v_2v_4\cdots v_{k+2}.\]
That is, the presence of all elements of the form $v_{j_1}v_{j_2}v_{j_3}$ with pairwise distinct $j_t \in \{ 1, \ldots, n \}$ inductively allows us to construct all elements of the form $v_{j_1}v_{j_2}\cdots v_{j_k}$ for $k \equiv 2, 3 \pmod 4$ with pairwise distinct $j_t \in \{ 1, \ldots, n \}$ for all $k \leq n$, with the possible exception of the situation $k=n \equiv 3 \pmod 4$, as the element $v_{k+2}$ does not exist in that case.
\end{proof}

\begin{remark}
It will turn out later, as a consequence of the proof of Theorem~\ref{thm:A} based on dimension arguments, that the above elements in fact generate $\imp$ as an $\RR$-vector space and that for $n \equiv 3 \pmod 4$ the element $v_1v_2\cdots v_n$ indeed is not contained in $\imp$, unless of course $n=3$.
\end{remark}

\begin{definition}
For $k\in\{0,1,2,3\}$, let
\[ \delta_k : \NN\to \NN : n \mapsto \sum_{\substack{i=0, \\ i \equiv k \pmod 4}}^n \binom{n}{i}. \]
\end{definition}

\begin{remark} \label{rem:delta_k}
Let $n\in\NN$ and let $M$ be a set of size $n$. Then the number of subsets of $M$ of size $k\pmod 4$ is precisely $\delta_k(n)$. Therefore
\[ \delta_0(n)+\delta_1(n)+\delta_2(n)+\delta_3(n)=2^n.\]
\end{remark}

\begin{consequence} \label{con:lowerbounddim-ineffective}
Let $n\geq 3$.
Then
\[
\dim \imp \geq
\begin{cases}
\delta_2(n) + \delta_3(n) & \text{ if }  n \not\equiv 3 \pmod 4 , \\
\delta_2(n) + \delta_3(n)-1 & \text{ if }  n \equiv 3 \pmod 4 .
\end{cases}
\]
\end{consequence}

\section{Combinatorics of binomial coefficients}

We now turn the lower bound from Consequence~\ref{con:lowerbounddim-ineffective} into a numerically explicit bound by deriving a closed formula in $n$ for the functions $\delta_k$.

\begin{proposition} \label{binomial}
Let $n \in \NN$ and $k\in\{0,1,2,3\}$.
\begin{enumerate}
\setcounter{enumi}{-1}
\item If $n \equiv 0 \pmod 4$, then
\[ \delta_k(n)
= \begin{cases}
  2^{n-2}  & \text{for } k \in \{ 1, 3 \}, \\
  2^{n-2}+(-1)^{\frac{n}{4}+ \frac{k}{2}} 2^{\frac{n}{2}-1}
           & \text{for }k \in \{ 0, 2 \}.
\end{cases}\]
\item If $n \equiv 1 \pmod 4$, then
\[ \delta_k(n)
= \begin{cases} 
  2^{n-2}+(-1)^{\frac{n-1}{4}}2^{\frac{n-3}{2}} & \text{for } k \in \{ 0, 1 \}, \\
  2^{n-2}-(-1)^{\frac{n-1}{4}}2^{\frac{n-3}{2}} & \text{for } k \in \{ 2, 3 \}.
  \end{cases}
\]
\item If $n \equiv 2 \pmod 4$, then
\[ \delta_k(n)
= \begin{cases} 
  2^{n-2} & \text{for } k \in \{ 0, 2 \}, \\
  2^{n-2}+(-1)^{\frac{n-2}{4}+ \frac{k-1}{2}} 2^{\frac{n}{2}-1}
          & \text{for } k \in \{ 1, 3 \}.
  \end{cases}
\]
\item If $n \equiv 3 \pmod 4$, then
\[ \delta_k(n)
= \begin{cases}         
  2^{n-2}-(-1)^{\frac{n-3}{4}}2^{\frac{n-3}{2}} & \text{for } k \in \{ 0, 3 \}, \\
  2^{n-2}+(-1)^{\frac{n-3}{4}}2^{\frac{n-3}{2}} & \text{for } k \in \{ 1, 2 \}.
  \end{cases}
\]
\end{enumerate}
\end{proposition}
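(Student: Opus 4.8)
The approach is a standard roots-of-unity filter applied to the binomial generating function, followed by a short case analysis to extract the explicit signs. Let $\zeta := \sqrt{-1}$, a primitive fourth root of unity. Filtering $(1+x)^n = \sum_{i=0}^{n}\binom{n}{i}x^i$ by residues modulo $4$ gives, for every $n \in \NN$,
\[
  \delta_k(n) \;=\; \frac{1}{4}\sum_{j=0}^{3}\zeta^{-jk}(1+\zeta^{j})^n
  \;=\; \frac{1}{4}\Bigl(2^{n} + \zeta^{-k}(1+\zeta)^n + \zeta^{-2k}(1-1)^n + \zeta^{k}(1-\zeta)^n\Bigr),
\]
where we used $\zeta^{-3k}=\zeta^{k}$, and the third summand vanishes since $n\geq 1$. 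Because $1\pm\zeta=\sqrt{2}\,e^{\pm\pi\sqrt{-1}/4}$ and the two remaining non-trivial summands are complex conjugates of one another, their sum equals $2^{\frac{n}{2}+1}\cos\!\bigl(\tfrac{n\pi}{4}-\tfrac{k\pi}{2}\bigr)$. Hence
\[
  \delta_k(n) \;=\; 2^{\,n-2} \;+\; 2^{\,\frac{n}{2}-1}\cos\!\Bigl(\tfrac{n\pi}{4}-\tfrac{k\pi}{2}\Bigr)
  \qquad\text{for all } n\geq 1,\ k\in\{0,1,2,3\}.
\]

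It then remains to evaluate the cosine in each residue class of $n$ modulo $4$. Writing $n=4m+r$ with $r\in\{0,1,2,3\}$, the angle becomes $m\pi + \tfrac{r\pi}{4} - \tfrac{k\pi}{2}$, so $\cos\!\bigl(\tfrac{n\pi}{4}-\tfrac{k\pi}{2}\bigr) = (-1)^m\cos\!\bigl(\tfrac{r\pi}{4}-\tfrac{k\pi}{2}\bigr)$. For $r$ even (i.e.\ $n\equiv 0,2\pmod 4$) the residual cosine lies in $\{0,\pm 1\}$, giving a clean $2^{\,n-2}$ for two of the four values of $k$ and a correction $\pm 2^{\,\frac{n}{2}-1}$ for the other two; for $r$ odd (i.e.\ $n\equiv 1,3\pmod 4$) it equals $\pm\tfrac{1}{\sqrt 2}$, so $2^{\,\frac{n}{2}-1}\cdot\bigl(\pm\tfrac{1}{\sqrt2}\bigr) = \pm 2^{\,\frac{n-3}{2}}$. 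In every case one reads off which $k$ yield which value, and substituting $m=\lfloor n/4\rfloor$ turns $(-1)^m$ times the sign of the residual cosine into the exponents $\tfrac{n}{4}+\tfrac{k}{2}$, $\tfrac{n-1}{4}$, $\tfrac{n-2}{4}+\tfrac{k-1}{2}$, $\tfrac{n-3}{4}$ appearing in the four parts of the statement.

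The only genuine work is this last bookkeeping step: matching the sign of $\cos\!\bigl(\tfrac{n\pi}{4}-\tfrac{k\pi}{2}\bigr)$ with the prescribed powers of $-1$ for each of the eight residue classes of $n$ modulo $8$ and each $k\in\{0,1,2,3\}$. I would carry this out simply by tabulating $\cos\!\bigl(\tfrac{n\pi}{4}-\tfrac{k\pi}{2}\bigr)$ for $n\bmod 8\in\{0,\dots,7\}$ and $k\in\{0,1,2,3\}$ and comparing entrywise with the claimed formulas; since everything is $8$-periodic in $n$, this is a finite verification with no subtlety beyond care. As an alternative that avoids complex numbers entirely, one can instead prove the proposition by induction on $n$ via the partitioned Pascal rule $\delta_k(n+1)=\delta_k(n)+\delta_{k-1\bmod 4}(n)$, checking the base cases $n\in\{1,2,3,4\}$ by hand and then verifying that the four stated closed forms are compatible with the shift $n\mapsto n+1$, which cycles through the four residue classes; this trades the Fourier computation for a slightly longer but equally elementary case check.
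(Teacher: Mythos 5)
Your proposal is correct, but it takes a genuinely different route from the paper. You derive the single uniform closed formula
\[
\delta_k(n)\;=\;2^{\,n-2}+2^{\,\frac{n}{2}-1}\cos\!\Bigl(\tfrac{n\pi}{4}-\tfrac{k\pi}{2}\Bigr)
\]
via the roots-of-unity filter on $(1+x)^n$, and the stated four cases then follow by evaluating the cosine on residues of $n$ modulo $4$ with the factor $(-1)^{\lfloor n/4\rfloor}$ extracted; I checked that this reproduces every sign pattern in the proposition, so the remaining tabulation you defer is indeed routine. The paper instead argues purely combinatorially: the pairing $S\leftrightarrow S\mathbin{\triangle}\{m\}$ gives $\delta_0(n)+\delta_2(n)=\delta_1(n)+\delta_3(n)=2^{n-1}$, the complementation pairing $S\leftrightarrow M\setminus S$ gives the residue-dependent equalities among the $\delta_k$, and the remaining values are pinned down by an induction in steps of two, partitioning the subsets of an $(n+2)$-set according to their intersection with a fixed two-element set. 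Your Fourier argument buys uniformity and brevity (one formula covers all eight residue classes at once, with no induction), at the price of leaving $\mathbb{Q}$ and doing a final sign bookkeeping; the paper's argument stays entirely within elementary bijective combinatorics and produces along the way the intermediate identities (the half-sum relation and the pairwise equalities of the $\delta_k$) that it then reuses to reduce the induction to a single value of $k$ per case. Your suggested alternative induction via $\delta_k(n+1)=\delta_k(n)+\delta_{k-1\bmod 4}(n)$ is closer in spirit to the paper's step-two induction and would also work.
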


\begin{proof}
Note first that the claimed identities hold for $n \in \{ 1, 2 \}$. The pairing $S \leftrightarrow S \triangle \{ m \}$, where $\triangle$ denotes symmetric difference, provides a bijection between the set of subsets of $M$ of even order with the set of subsets of $M$ of odd order. Combined with Remark~\ref{rem:delta_k} we conclude
\begin{equation}
\delta_0(n)+\delta_2(n) = \delta_1(n)+\delta_3(n) = 2^{n-1}.
  \label{eqn:halfsum}
\end{equation}

Moreover, the pairing $S \leftrightarrow M \setminus S$ provides a bijection
\begin{enumerate}[label={\rm(\roman*)}]
\item between the set of subsets of $M$ of order $1 \pmod 4$ and the set of subsets of $M$ of order $3 \pmod 4$, if $n \equiv 0 \pmod 4$,
\item between the set of subsets of $M$ of order $0 \pmod 4$ and the set of subsets of $M$ of order $1 \pmod 4$ and between the set of subsets of $M$ of order $2 \pmod 4$ and the set of subsets of $M$ of order $3 \pmod 4$, if $n \equiv 1 \pmod 4$,
\item between the set of subsets of $M$ of order $0 \pmod 4$ and the set of subsets of $M$ of order $2 \pmod 4$, if $n \equiv 2 \pmod 4$,
\item between the set of subsets of $M$ of order $0 \pmod 4$ and the set of subsets of $M$ of order $3 \pmod 4$ and between the set of subsets of $M$ of order $1 \pmod 4$ and the set of subsets of $M$ of order $2 \pmod 4$, if $n \equiv 3 \pmod 4$.
\end{enumerate}
Hence
\begin{align*}
\delta_1(n)&=\delta_3(n) &\text{ for } n \equiv 0 \pmod 4, \\
\delta_0(n)&=\delta_1(n) \quad\text{and}\quad
\delta_2(n) =\delta_3(n) &\text{ for } n \equiv 1 \pmod 4, \\
\delta_0(n)&=\delta_2(n) &\text{ for } n \equiv 2 \pmod 4, \\
\delta_0(n)&=\delta_3(n) \quad\text{and}\quad
\delta_1(n) =\delta_2(n) &\text{ for } n \equiv 3 \pmod 4.
\end{align*}

Together with Equation~\ref{eqn:halfsum}, this already yields the claim for (a), case $k \in \{ 1, 3 \}$ and for (c), case $k \in \{ 0, 2 \}$.

We will now prove case $k = 0$ of (b), (d) by induction, which by the above observations implies all claims made in (b), (d). Let $M$ be a set of order $n+2$ and let $a, b \in M$ be distinct elements so that $M = M' \cup \{ a, b \}$ for a set $M'$ of cardinality $n$. A subset $S \subset M$ of cardinality $0 \pmod 4$ satisfies exactly one of the following:
\begin{enumerate}[label={\rm(\roman*)}]
\item $S\subset M'$ has cardinality $0 \pmod 4$,
\item $S\setminus\{a\}\subset M'$ has cardinality $3 \pmod 4$,
\item $S\setminus\{b\}\subset M'$ has cardinality $3 \pmod 4$,
\item $S\setminus\{a,b\}\subset M'$ has cardinality $2 \pmod 4$.
\end{enumerate}
Hence for $n \equiv 1 \pmod 4$ resp.\ $n+2 \equiv 3 \pmod 4$ we have
\begin{align*}
\delta_0(n+2)
&= \delta_0(n) + \delta_2(n) + 2 \delta_3(n)
 = 2^{n-1} + 2 \delta_3(n) \\
&= 2^{n-1} + 2 \left(2^{n-2} - (-1)^{\frac{n-1}{4}}2^{\frac{n-3}{2}}\right) \\
&= 2^{n} - (-1)^{\frac{n-1}{4}}2^{\frac{n-1}{2}} \\
&= 2^{(n+2)-2} - (-1)^{\frac{(n+2)-3}{4}}2^{\frac{(n+2)-3}{2}},
\end{align*}
and similarly for $n \equiv 3 \pmod 4$ resp.\, $n+2 \equiv 1 \pmod 4$ we have
\begin{align*}
\delta_0(n+2)
&= \delta_0(n) + \delta_2(n) + 2 \delta_3(n)
 = 2^{n-1} + 2 \delta_3(n) \\
&= 2^{n-1} + 2\left(2^{n-2} - (-1)^{\frac{n-3}{4}}2^{\frac{n-3}{2}}\right) \\
&= 2^{n} - (-1)^{\frac{n-3}{4}}2^{\frac{n-1}{2}} \\
&= 2^{(n+2)-2} + (-1)^{\frac{(n+2)-1}{4}}2^{\frac{(n+2)-3}{2}}.
\end{align*}
Next we prove case $k=0$ of (a) using (c) as an induction hypothesis and afterwards case $k=1$ of (c) using (a) as an induction hypothesis. By the above observations this implies all claims made in (a) and (c).

In order to establish case $k=0$ of (a) we use the exact same combinatorial induction step as above and arrive again at
\begin{align*}
\delta_0(n+2)
&= \delta_0(n) + \delta_2(n) + 2 \delta_3(n)
 = 2^{n-1} + 2 \delta_3(n) \\
&= 2^{n-1} + 2\left(2^{n-2} + (-1)^{\frac{n-2}{4}+\frac{3-1}{2}}2^{\frac{n}{2}-1} \right) \\
&= 2^n + (-1)^{\frac{n+2}{4}}2^{\frac{n}{2}} \\
&= 2^{(n+2)-2} + (-1)^{\frac{n+2}{4}+\frac{0}{2}}2^{\frac{n+2}{2}-1}
\end{align*}
as claimed.

In order to establish case $k=1$ of (c) we use the same combinatorial induction step as above but need to observe that if $S\subset M$ is a subset of cardinality $1\pmod 4$, then $S \setminus \{ a, b \}$  may have cardinality $1 \pmod 4$, $3 \pmod 4$ or, in two different ways, $0 \pmod 4$. Therefore
\begin{align*}
\delta_1(n+2)
&= 2 \delta_0(n) + \delta_1(n) + \delta_3(n)
 = 2 \delta_0(n) + 2^{n-1} \\
&= 2\left(2^{n-2}+(-1)^{\frac{n}{4}+\frac{0}{2}}2^{\frac{n}{2}-1} \right) + 2^{n-1} \\
&= 2^n + (-1)^{\frac{n}{4}+\frac{0}{2}}2^{\frac{n}{2}} \\
&= 2^{(n+2)-2} + (-1)^{\frac{(n+2)-2}{4}+\frac{1-1}{2}}2^{\frac{n+2}{2}-1}.
\qedhere
\end{align*}
\end{proof}

Combining this with Consequence~\ref{con:lowerbounddim-ineffective} yields
the following:

\begin{consequence} \label{comparedimension}
Let $n \in \NN$ and $n\geq 2$.
\begin{enumerate}
\setcounter{enumi}{-1}
\item If $n \equiv 0 \pmod 8$, then
\begin{align*}
  \dim \imp
  \geq \delta_2(n) + \delta_3(n)
&= 2^{n-2} - 2^{\frac{n}{2}-1} + 2^{n-2}
= 2^{\frac{n-2}{2}}(2^{\frac{n}{2}}-1) \\
&= \dim_\RR(\mathfrak{so}(2^{\frac{n}{2}})).
\end{align*}
\item If $n \equiv 1 \pmod 8$, then
\begin{align*}
  \dim \imp
  \geq \delta_2(n) + \delta_3(n)
&= 2\left( 2^{n-2} - 2^{\frac{n-3}{2}} \right)
= 2^{\frac{n-1}{2}}(2^{\frac{n-1}{2}}-1) \\
&= \dim_\RR(\mathfrak{so}(2^{\frac{n-1}{2}}) \oplus \mathfrak{so}(2^{\frac{n-1}{2}})).
\end{align*}
\item If $n \equiv 2 \pmod 8$, then
\begin{align*}
  \dim \imp
  \geq \delta_2(n) + \delta_3(n)
&= 2^{n-2} + 2^{n-2} - 2^{\frac{n}{2}-1}
= 2^{\frac{n-2}{2}}(2^{\frac{n}{2}}-1) \\
&= \dim_\RR(\mathfrak{so}(2^{\frac{n}{2}})).
\end{align*}
\item If $n \equiv 3 \pmod 8$, then
\begin{align*}
  \dim \imp + 1
  \geq \delta_2(n) + \delta_3(n)
&= 2^{n-2} + 2^{\frac{n-3}{2}} + 2^{n-2} - 2^{\frac{n-3}{2}}
= 2^{n-1} \\
&= \dim_\RR(\mathfrak{su}(2^{\frac{n-1}{2}}))+1.
\end{align*}
\item If $n \equiv 4 \pmod 8$, then
\begin{align*}
  \dim \imp
  \geq \delta_2(n) + \delta_3(n)
&= 2^{n-2} + 2^{\frac{n}{2}-1} + 2^{n-2}
= 2^{\frac{n-2}{2}}(2^{\frac{n}{2}}+1) \\
&= \dim_\RR(\mathfrak{sp}(2^{\frac{n-2}{2}})).
\end{align*}
\item If $n \equiv 5 \pmod 8$, then
\begin{align*}
  \dim \imp
  \geq \delta_2(n) + \delta_3(n)
&= 2\left( 2^{n-2} + 2^{\frac{n-3}{2}} \right)
= 2^{\frac{n-1}{2}}(2^{\frac{n-1}{2}}+1) \\
&= \dim_\RR(\mathfrak{sp}(2^{\frac{n-3}{2}})\oplus \mathfrak{sp}(2^{\frac{n-3}{2}})).
\end{align*}
\item If $n \equiv 6 \pmod 8$, then
\begin{align*}
  \dim \imp
  \geq \delta_2(n) + \delta_3(n)
&= 2^{n-2} + 2^{n-2} + 2^{\frac{n}{2}-1}
= 2^{\frac{n-2}{2}}(2^{\frac{n}{2}}+1) \\
&= \dim_\RR(\mathfrak{sp}(2^{\frac{n-2}{2}})).
\end{align*}
\item If $n \equiv 7 \pmod 8$, then
\begin{align*}
  \dim \imp + 1
  \geq \delta_2(n) + \delta_3(n)
&= 2^{n-2} - 2^{\frac{n-3}{2}} + 2^{n-2} + 2^{\frac{n-3}{2}}
= 2^{n-1} \\
&= \dim_\RR(\mathfrak{su}(2^{\frac{n-1}{2}}))+1.
\end{align*}
\end{enumerate}
\end{consequence}

\section{Generalized spin representations of the split real $E_n$ series and the resulting quotients}

The example of a generalized spin representation of the maximal compact subalgebra of the split real Kac--Moody Lie algebra of type $E_{10}$ described in \cite{DamourKleinschmidtNicolai}, \cite{deBuylHenneauxPaulot}, \cite{Hainke/Koehl/Levy} generalizes directly to the whole $E_n$ series as follows.

Let $n \in \NN$, let $\mathfrak{g}$ be the split real Kac--Moody Lie algebra of type $E_n$, let $\mathfrak{k}$ be its maximal compact subalgebra, and let $X_i$, $1 \leq i \leq n$, be the Berman generators of $\mathfrak{k}$ (cf.\ \cite[Theorem~1.31]{Berman}, \cite[Theorem~2.1]{Hainke/Koehl/Levy}) enumerated in Bourbaki style as shown in  Figure~\ref{fig:En}, i.e., $X_1$, $X_3$, $X_4$, \ldots, $X_n$ belong to the $A_{n-1}$ subdiagram, generating $\mathfrak{so}(n)$, and $X_2$ to the additional node. As in Section~\ref{sec:cartan-bott} let $q$ be the standard positive definite quadratic form on $\RR^n$ and let $C = C(\RR^n,q)$ be the corresponding Clifford algebra, considered as a Lie algebra.

\begin{proposition}
Let $n\geq 3$. The assignment
\begin{itemize}
\item $X_{1} \mapsto v_1v_2$,
\item $X_2 \mapsto v_1v_2v_3$,
\item $X_j \mapsto v_{j-1}v_j$ for $3 \leq j \leq n$
\end{itemize}
defines a Lie algebra homomorphism $\rho$ from $\mathfrak{k}$ to the Lie subalgebra $\imp$ of $C$ generated by $v_1v_2v_3$ and by $v_iv_{i+1}$, $1 \leq i < n$, called the {\bf standard generalized spin representation of $\mathfrak{k}$}.
\end{proposition}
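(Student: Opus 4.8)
The plan is to invoke the Berman presentation of $\kk$ and to verify the defining relations on the generators by a direct computation inside the Clifford algebra. By \cite[Theorem~1.31]{Berman} and \cite[Theorem~2.1]{Hainke/Koehl/Levy} --- applicable because the diagram $E_n$ is simply laced with every rank-$2$ subdiagram of type $A_1 \oplus A_1$ or $A_2$ --- the Lie algebra $\kk$ is generated by the Berman generators $X_1, \ldots, X_n$ subject precisely to the generalized Serre relations of $E_n$: $[X_i, X_j] = 0$ whenever the nodes $i$ and $j$ are non-adjacent, and $[X_i,[X_i,X_j]] = -4\,X_j$ whenever they are adjacent (the constant reflecting the normalization of the $X_i$ used here, and being irrelevant for the image). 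Writing $Y_i$ for the prescribed image of $X_i$, so $Y_1 = v_1v_2$, $Y_2 = v_1v_2v_3$ and $Y_j = v_{j-1}v_j$ for $3 \le j \le n$, it therefore suffices to check these relations for the $Y_i$ inside $(C,[\cdot,\cdot])$; granting this, $\rho$ is a well-defined Lie algebra homomorphism, and since $Y_2$ together with $Y_1, Y_3, Y_4, \ldots, Y_n$ are exactly the elements $v_1v_2v_3$ and $v_iv_{i+1}$ ($1 \le i < n$) generating $\imp$, the image of $\rho$ equals $\imp$.

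The computation is organized along the $E_n$ diagram, whose edges are $\{1,3\}$, $\{3,4\}, \{4,5\}, \ldots, \{n-1,n\}$ and, for $n \ge 4$, also $\{2,4\}$ (for $n=3$ only $\{1,3\}$ occurs and node $2$ is isolated). The two elementary facts used throughout are: first, for reduced Clifford monomials $v_I = v_{i_1}\cdots v_{i_r}$ and $v_J = v_{j_1}\cdots v_{j_s}$ one has $v_I v_J = (-1)^{rs + |I\cap J|}\, v_J v_I$ (immediate from $v_a^2 = 1$ and $v_a v_b = -v_b v_a$), hence $[v_I, v_J] = 0$ as soon as $rs + |I\cap J|$ is even; second, for pairwise distinct indices $[v_av_b, v_bv_c] = 2v_av_c$ and $[v_av_b, v_av_c] = -2v_bv_c$, together with the degree-$3$ analogues $[v_1v_2v_3, v_3v_4] = 2v_1v_2v_4 = -[v_3v_4, v_1v_2v_3]$ and $[v_3v_4, v_1v_2v_4] = 2v_1v_2v_3$.

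The relations then fall into three groups. (a) For an edge $\{i,j\}$ inside the $A_{n-1}$ subdiagram (including $\{1,3\}$), $Y_i$ and $Y_j$ are monomials $v_av_b$ and $v_bv_c$ sharing exactly one index, so $[Y_i,[Y_i,Y_j]] = [v_av_b, 2v_av_c] = -4\,v_bv_c = -4\,Y_j$ and symmetrically; for a non-edge inside the $A_{n-1}$ subdiagram the monomials $Y_i, Y_j$ have disjoint support, hence commute. (Alternatively: $\rho$ restricted to $X_1, X_3, \ldots, X_n$ is the standard spin representation of $\kk(A_{n-1}) \cong \mathfrak{so}(n)$ onto $\Lambda^2\RR^n \subset C$, cf.\ the proof of Lemma~\ref{lowerbounddimension}.) (b) Node $2$ is non-adjacent to every node $\ne 4$: against $Y_1$ the supports are $\{1,2,3\}$ and $\{1,2\}$, against $Y_3$ they are $\{1,2,3\}$ and $\{2,3\}$ --- in both cases $rs + |I\cap J| = 6+2$ is even --- and against $Y_j$ with $j \ge 5$ they are disjoint; so $[Y_2, Y_j] = 0$ in each case. (c) For the edge $\{2,4\}$, $[Y_2,[Y_2,Y_4]] = [v_1v_2v_3, 2v_1v_2v_4] = -4\,v_3v_4 = -4\,Y_4$ and $[Y_4,[Y_4,Y_2]] = [v_3v_4, -2v_1v_2v_4] = -4\,v_1v_2v_3 = -4\,Y_2$.

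I do not expect a genuine obstacle here. The one substantive input is the Berman presentation --- in particular that the $2$-sphericity of $E_n$ leaves no relations beyond the generalized Serre relations --- which is imported wholesale from \cite{Berman}, \cite{Hainke/Koehl/Levy}. The remainder is sign bookkeeping, the only mildly delicate point being the commutators of the odd-degree monomial $v_1v_2v_3$ with the even-degree ones, which the parity rule $v_Iv_J = (-1)^{rs+|I\cap J|}v_Jv_I$ handles uniformly; one may also note in passing that $Y_i^2 = -1$ for all $i$, as befits a spin representation. The degenerate case $n = 3$ is covered by the same computations: one then needs only the instances $[Y_2, Y_1] = [Y_2, Y_3] = 0$ from (b) and the $\{1,3\}$-relation from (a).
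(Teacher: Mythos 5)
Your argument is correct, and the computations check out (the parity rule $v_Iv_J=(-1)^{rs+|I\cap J|}v_Jv_I$, the commutators $[v_1v_2v_3,v_3v_4]=2v_1v_2v_4$, $[v_3v_4,v_1v_2v_4]=2v_1v_2v_3$, and the three groups of relations along the $E_n$ diagram are all right, including the degenerate $n=3$ case). However, your route differs from the paper's: the paper does not re-verify the Berman relations at all, but instead invokes the criterion of \cite[Remark~4.5]{Hainke/Koehl/Levy} --- elements $A_i$ of an associative algebra that square to $-\tfrac14$ and pairwise commute or anticommute according to non-adjacency or adjacency in the diagram automatically yield a generalized spin representation $X_i\mapsto A_i$ --- and observes that the verification is word-for-word the one done for $E_{10}$ in \cite[Example~4.1]{Hainke/Koehl/Levy}. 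Your direct check of the Serre-type relations is more self-contained (given the Berman presentation) but slightly more laborious; the criterion reduces everything to the commutation/anticommutation pattern, which your parity rule in effect re-derives. One point to be precise about: with the Berman generators as normalized in the cited sources ($X_i=e_i-f_i$), the relation for adjacent nodes is $[X_i,[X_i,X_j]]=-X_j$, not $-4X_j$; your constant $-4$ corresponds to rescaling the generators by $2$, equivalently the honest homomorphism under the standard normalization is $X_i\mapsto\tfrac12 Y_i$ (exactly as in \cite[Example~4.1]{Hainke/Koehl/Levy}, where the images carry the factor $\tfrac12$ so that they square to $-\tfrac14$). You flag this, and it is immaterial for the image, since the Lie subalgebra generated by the $\tfrac12 Y_i$ is still $\imp$; but if you keep the monomials unscaled you should say explicitly which normalization of the $X_i$ you are using, since as literally stated the relations of \cite[Theorem~1.31]{Berman} and \cite[Theorem~2.1]{Hainke/Koehl/Levy} would not be satisfied by the unscaled $Y_i$.
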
 

\begin{proof}
The proof is based on the criterion established in \cite[Remark~4.5]{Hainke/Koehl/Levy} and is exactly the same as in the $E_{10}$ case discussed in \cite[Example~4.1]{Hainke/Koehl/Levy}.  
\end{proof}

\begin{proof}[Proof of Theorem~\ref{thm:A}]
By \cite[Theorem~4.11]{Hainke/Koehl/Levy} and since
$E_n$ is simply laced and connected for $n\geq 4$, the image $\imp$ of $\rho$ is semisimple and compact. By Lemma~\ref{lowerbounddimension} and Consequence~\ref{comparedimension}, $\dim_\RR(\imp)$ is at least as large as the dimension of the semisimple maximal compact Lie subalgebra of $C$ as given in Corollary~\ref{cor:cartan-bott-max-cpt}. The claim follows. 
\end{proof}

\begin{proof}[Proof of Theorem~\ref{thm:B}]
Let $\mathfrak{g}$ be a semisimple split real Lie algebra of type $E_4=A_4$, $E_5=D_5$, $E_6$, $E_7$ or $E_8$ and $\mathfrak{g} = \mathfrak{k} \oplus \mathfrak{a} \oplus \mathfrak{n}$ its Iwasawa decomposition. Since $\dim_\mathbb{R}(\mathfrak{k}) = \dim_\mathbb{R}(\mathfrak{n})$, from the combinatorics of the respective root system we conclude that the maximal compact Lie subalgebra $\mathfrak{k}$ has dimension
\begin{align*}
10 &= \frac{4\cdot 5}{2}
    = \frac{2^{\frac{4}{2}} \cdot (2^{\frac{4}{2}}+1)}{2}
    = \dim_\RR(\mathfrak{sp}(2))
    = \dim_\RR(\mathfrak{so}(5))
	&\text{ if } n=4, \\
20 &= 2 \cdot 10
    = \dim_\RR(\mathfrak{sp}(2)\oplus\mathfrak{sp}(2))
    = \dim_\RR(\mathfrak{so}(5)\oplus\mathfrak{so}(5))
	&\text{ if } n=5, \\
36 &= 4 \cdot 9
    = 2^{\frac{6-2}{2}}(2^{\frac{6}{2}}+1)
    = \dim_\RR(\mathfrak{sp}(2^{\frac{n-2}{2}})) 
	&\text{ if } n=6, \\
63 &= 2^6-1
    = \dim_\RR(\mathfrak{su}(8)) 
	&\text{ if } n=7, \\
120 &= \frac{16 \cdot 15}{2}
    = \frac{2^{\frac{8}{2}} \cdot (2^{\frac{8}{2}}-1)}{2}
    = \dim_\RR(\mathfrak{so}(16)) 
	&\text{ if } n=8.
\end{align*}
For $n\geq 4$ we may now apply Theorem~\ref{thm:A} and deduce that the
standard generalized spin representation $\rho$ has to be injective
in these cases.

This leaves the case $E_3=A_2\oplus A_1$. Since this diagram is not irreducible,
\cite[Theorem~4.11]{Hainke/Koehl/Levy}
only implies that $\im(\rho)=\imp$ is compact but not that it is
semisimple (and indeed, it is not). However, $n=3$ is also 
an exceptional case for Lemma~\ref{lowerbounddimension}. Taking that into consideration,
it follows that $\dim_\RR(\imp)\geq 2^2$ ($1, v_1v_2, v_2v_3, v_1v_2v_3$ is a basis of $\imp$). 
On the other hand, the Clifford algebra $C$ is isomorphic to $\M{2}{\CC}$,
hence $\kk\cong \mathfrak{u}(2)$, and this has dimension $4$.
Thus $\rho$ is also injective when $n=3$. The claim follows.
\end{proof}

\bibliographystyle{ralf}
\bibliography{References}

\end{document}